\newtheorem{theorem}{Theorem}[section]
\newtheorem{lemma}[theorem]{Lemma}
\numberwithin{equation}{section}
\begin{document}

\title{On classical solutions in the stabilization problem for nonholonomic control systems with time-varying feedback laws}

\author[1,3]{Alexander~Zuyev}
\author[2,3]{Victoria Grushkovskaya}
\affil[1]{Max Planck Institute for Dynamics of Complex Technical Systems, 39106 Magdeburg, Germany}
\affil[2]{Department of Mathematics, University of Klagenfurt, 9020 Klagenfurt am W\"orthersee, Austria}
\affil[3]{Institute of Applied Mathematics and Mechanics, National Academy of Sciences of Ukraine}

\keywords{nonlinear control, nonholonomic system, stabilization, time-varying feedback law, Chen--Fliess series}

\msc{93D15, 93C10, 93B51, 93D30}

\abstract{
We consider the stabilization problem for driftless control-affine systems under the bracket-generating condition.
In our previous works, a class of time-varying feedback laws has been constructed to stabilize the equilibrium of a nonholonomic system under rather general controllability assumptions.
This stabilization scheme is based on the sampling concept, which is not equivalent to the classical definition of solutions for the corresponding nonautonomous closed-loop system.
In the present paper, we refine the previous results by
presenting sufficient conditions for the convergence of classical solutions of the closed-loop system to the equilibrium. Our theoretical findings are applied to a multidimensional driftless control-affine system and illustrated through numerical simulations.}

\maketitle

\section{INTRODUCTION}\label{introsec}
Consider a driftless control-affine system
\begin{equation}\label{sys_affine}
\dot x = \sum_{j=1}^m u_j f_j(x),\quad x\in D\subset {\mathbb R}^n,\;  u\in {\mathbb R}^m,\;m<n,
\end{equation}
with the state $x=(x_1,x_2, ..., x_n)^\top$ and the control $u=(u_1,u_2,...,u_m)^\top$.
We assume that the vector fields  $f_1$, ..., $f_m$ are smooth in the domain $D$, with $0\in D$, and that $${\textrm rank}\left( f_1(0), f_2(0),...,f_m(0)\right) = m.$$
This class of systems includes, in particular, mathematical models of underactuated robots with nonholonomic constraints.
It is well-known that the equilibrium $x=0$ of the underactuated system~\eqref{sys_affine} is not stabilizable by a regular time-invariant feedback law $u=h(x)$ due to the violation of Brockett's condition~\cite{Bro83}.

The stabilization problem for underactuated systems under nonlinear controllability conditions has been extensively studied. Without pretending to be exhaustive, we mention stabilizability results for general controllable systems using
time-invariant discontinuous controllers~\cite{Clar97}
and
time-varying feedback laws~\cite{Cor92},~\cite[Chap.~11]{coron2007control}, as well as averaging techniques with fast oscillating inputs~\cite{Bom13} and control design with transverse functions~\cite{morin2004practical,morin2009control}.

In~\cite{zhang2022new}, a class of nonholonomic systems with external disturbances is considered, and constrained stabilizing feedback laws are proposed using state transformations and dynamic surface control. These controllers are experimentally tested on a wheeled mobile robot.
A chained nonholonomic system in cascaded form with two subsystems is considered in~\cite{gao2024fas}, accounting for model uncertainties and disturbances. The stabilization problem is addressed using a state feedback law, where output control is applied to the fully actuated subsystem, and an exponentially stable component is designed.
In~\cite{liu2024event}, a fuzzy logic approach is applied to approximate uncertainties in a kinematic nonholonomic system to address the stabilization problem with output constraints. The considered constraints are satisfied by combining the design of a barrier Lyapunov function with an event-triggered scheme, which is applied to a wheeled mobile robot.

In the paper~\cite{ZuSIAM}, a family of time-varying feedback laws was proposed to solve the exponential stabilization problem {for} nonholonomic systems {with  degree of nonholonomy~2.} The coefficients of these controllers are defined by solving an auxiliary system of nonlinear algebraic equations, and a simplified control design procedure was developed in~\cite{GZ23} using the inverse of the matrix that appears in the controllability rank condition.
Additionally, an algorithm for constructing stabilizing controllers for high-order nonholonomic systems was recently reported in~\cite{grushkovskaya2024design}.
In the above-mentioned papers~\cite{GZ23,ZuSIAM,grushkovskaya2024design},
{the behavior of the closed-loop system is analyzed in the sense of sampled-data solutions,
leaving open the question of the asymptotic behavior of classical (or Carath\'eodory) solutions under the proposed Lie-bracket-based oscillatory feedback laws. It is important to note that the available characterization of the sampled system (based on $\varepsilon$-discrete updates) does not guarantee convergence in the classical solution sense.
The present paper addresses this gap by rigorously proving that time-varying controllers, whose oscillatory components are inspired by those in~\cite{GZ23,ZuSIAM}, ensure convergence of classical solutions to the equilibrium. This establishes novel stability results and enhances the theoretical foundation of Lie-bracket-based control design.}

The subsequent exposition is organized as follows.
Section~\ref{gensec} presents our control design scheme, which is based on a weak control Lyapunov function and establish strict decrease conditions using suitable oscillating controllers. The resulting convergence results are formulated in Theorem~1, while Theorem~2 {highlights the novelty of our design by incorporating Lie-bracket structure of the considered system.}
In Section~\ref{secex}, the proposed control methodology is applied to a multidimensional nonholonomic integrator, {illustrating the effectiveness of the design.}
Finally, concluding remarks are provided in Section~\ref{secconc}.

\section{CONTROL DESIGN}\label{gensec}
Let us consider a positive definite function $V\in C^2(D)$  and denote by $\dot V(x,u)=\sum\limits_{j=1}^m u_j L_{f_j} V(x)$  the time derivative of $V(x)$ along the trajectories of~\eqref{sys_affine},
where $L_{f_j}V(x):=\sum\limits_{i=1}^n \tfrac{\partial V(x)}{\partial x_i} f_{ji}(x)=\nabla V(x) f_j(x)$ stands for the directional derivative of $V(x)$ along $f_j(x)$ (the gradient $\nabla V(x)=\Big(\tfrac{\partial V}{\partial x_1},...,\tfrac{\partial V}{\partial x_n}\Big)$ is treated as row vector in these notations). Note that any such $V(x)$ is a {\em weak} control Lyapunov function (weak CLF) because of the driftless form of system~\eqref{sys_affine}, i.e. for each $x\in D$, there exists a $u_x\in {\mathbb R}^m$ such that $\dot V(x,u_x)\le 0$. Note that there are no {\em strict} control Lyapunov functions $V(x)$ for system~\eqref{sys_affine}  under our assumptions because of Brockett's~\cite{Bro83} and Artstein's~\cite{Artstein} theorems (see also~\cite{Son89}). It means that $\dot V(x,u)$ cannot be made negative definite by any choice of controls.
Thus, for any positive definite $V$ and any neighborhood of the origin in $\mathbb R^n$, there are points $x\neq 0$ with $\dot V(x,u)\ge 0$ for all $u\in{\mathbb R}^m$.

We  propose a solution of the asymptotic stabilization problem by constructing a family of time-varying feedback controls $u=h^\varepsilon(x,t)$ depending on a small parameter $\varepsilon>0$. Even though the time derivative of $V$ along the classical solutions $x(t)$ of the corresponding closed-loop system is not negative definite, we will derive sufficient conditions for the finite difference $V(x(t_0+\varepsilon))-V(x(t_0))$ to be negative along nontrivial solutions, provided that $\varepsilon$ is small enough. Then, under additional technical assumptions, the above property implies the convergence of $x(t)$ to $0$ as $t\to +\infty$.

For the sake of brevity of presentation, we restrict ourselves to a particular case of
{systems~\eqref{sys_affine} with degree of nonholonomy 2 (see, e.g.,~\cite{bellaiche2005geometry} for the definition of the degree of nonholonomy). In this case, the controllability of system~\eqref{sys_affine} is established using the Lie brackets of the vector fields $f_j$, without the need to consider higher-order (iterated) Lie brackets.}
 Namely, let
$S\subset \{1,2,...,m\}^2$ be a set
{of index pairs}
$I=(ij)$ such that {its cardinality is} $\vert S \vert =n-m$, and
\begin{equation}\label{bracket_gen}
{\textrm span}\{f_k(x), f^I(x) \,\vert\, k {\in\{1,\dots,m\}},\, I\in S \} = {\mathbb R}^n\;{\forall  x\in D},
\end{equation}
where $f^I(x):= [f_i,f_j](x)$ is the Lie bracket of the vector fields $f_i(x)$ and $f_j(x)$
{corresponding to the index pair
$I=(ij)$, defined by}
$
[f_i,f_j](x):= \dfrac{\partial f_j(x)}{\partial x}f_i(x) - \dfrac{\partial f_i(x)}{\partial x}f_j(x)
$.\\
We then construct a time-varying feedback control $u=h^\varepsilon(x,t)$ {with  components}
\begin{equation}\label{feedback_general}
h_k^\varepsilon(x,t) = v_k^0(x) + { \gamma} \sum_{I\in S} v_k^I(x)\phi_k^{I,\varepsilon}(t),\; k=1,...,m,
\end{equation}
where the time-independent part $v^0(x)$ is designed {to ensure $\dot V(x,v^0(x))\le 0$ for} a weak CLF candidate $V$,
$\gamma>0$ is a gain parameter,
and the time-varying part $v^I(x)\phi^{I,\varepsilon}(t)$ approximately drives the system along the Lie bracket $\pm f^I(x)$  to ensure a decrease in $V(x(t))$  over discrete
 $\varepsilon$-time shifts, even when $\dot V$ is not negative definite. The functions $\phi_k^{I,\varepsilon}(t)$ are defined in the following way: if $I=(ij)\in S$, then
\begin{equation}\label{phi_general}
\hspace{-0.5em}\begin{aligned}
&\phi_i^{I,\varepsilon} (t) {=} 2\sqrt{\tfrac{\varkappa_I \pi }{\varepsilon}}\cos (\varkappa_I \omega t), \phi_j^{I,\varepsilon} (t) {=} 2\sqrt{\tfrac{\varkappa_I \pi }{\varepsilon}}\sin (\varkappa_I \omega t),\\
&\phi_k^{I,\varepsilon} (t) = 0\; \text{for all}\; k\notin \{i,j\},
\end{aligned}
\end{equation}
where $\omega=\frac{2\pi}{\varepsilon}$ and $\varkappa_I$ is a positive integer frequency multiplier. The latter integer multipliers should be chosen to avoid resonances between the control components that generate different Lie brackets, i.e.
\begin{equation}\label{nonres}
\varkappa_I \neq \varkappa_{I'}\quad \text{for all}\;\; S\ni I\neq I' \in S.
\end{equation}
In particular, we can take $\varkappa_{I_1}=1$, $\varkappa_{I_2}=2$, ...,  $\varkappa_{I_{n-m}}=n-m$ for $S=\{I_1,I_2,...,I_{n-m}\}$.
The closed-loop system~\eqref{sys_affine} with controls $u_k=h^\varepsilon_k(x,t)$ of the form~\eqref{feedback_general} reads as
\begin{equation}\label{closed-loop}
    \dot x = g_0(x) + \gamma \sum_{I\in S}\sum_{k=1}^m g_k^I(x)\phi_k^{I,\varepsilon} (t)
\end{equation}
\vspace{-1em}
\begin{equation}\label{g_fields}
\text{with }\quad g_0(x) = \sum_{k=1}^m v_k^0(x) f_k(x),\; g_k^I(x) = v_k^I(x) f_k(x),
\end{equation}
where, similarly to~\eqref{phi_general}, we assume that, for each $I=(ij)\in S$:
$
v_k^I(x) = 0 \; \text{and}\;  g_k^I(x) =0 \;\text{for all}\; k\notin\{i,j\}
$.

{To analyze the behavior of solutions of system~\eqref{closed-loop}, we will exploit the  Chen--Fliess expansion.
This leads to the result below, which is directly based on~\cite[S.~40.2]{La95},~\cite[Lemma~5.1]{ZG17}.
\begin{lemma}
 \textit{Let $D\subseteq\mathbb R^n$ be a domain,     $\psi_i{\in} C^2(D;\mathbb R^n)$, $i{\in}\{0,1,\dots ,m\}$, and let $x(t){ \in} D$, $t {\in} [0,\tau]$, $\tau{>}0$, be a solution of the system $\dot{x} = \psi_0(x)+\sum_{i=1}^{\ell} \psi_i(x)w_i(t)$ with $w {\in} C([0,\tau]; \mathbb{R}^m)$ and $x(0){ = }x^0 {\in} D$. Then, for any $t {\in} [0, \tau]$, $x(t)$ can be represented by the Chen--Fliess series as}
{\small $$
\begin{aligned}
&x(t) = x^0 + \sum_{i=0}^\ell \psi_i(x^0)\int_0^t w_i(s_1)ds_1\\
 &+ \sum_{i_1,i_2=0}^{\ell}  L_{\psi_{i_2}} \psi_{i_1}(x^0)
\int_0^t \int_0^{s_1}  w_{i_1}(s_1)   w_{i_2}(s_2) \, ds_2 ds_1 + \rho(t),   \\
&\text{with }\rho(t) = \sum_{i_1,i_2,i_{3}=0}^{\ell}\int_0^t \int_0^{s_1}\int_0^{s_2}   L_{\psi_{i_{3}}} L_{\psi_{i_2}}\psi_{i_1}(x(s_{3}))
\\
&\qquad\times w_{i_1}(s_1) w_{i_2}(s_2) w_{i_3}(s_3) \, ds_{3} ds_2 ds_1.
\end{aligned}$$}
\emph{For notational convenience, we define
$w_0\equiv 1$.}
\end{lemma}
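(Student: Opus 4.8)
The plan is to derive the expansion by iterating a single integral identity three times, after absorbing the drift into the sum via the convention $w_0\equiv 1$, so that the dynamics read $\dot x=\sum_{i=0}^{\ell}\psi_i(x)w_i(t)$.

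First I would record the elementary building block: for any map $\phi\in C^1(D;\mathbb R^n)$, the chain rule along the solution gives $\frac{d}{dt}\phi(x(t))=\nabla\phi(x(t))\,\dot x(t)=\sum_{i=0}^{\ell}(L_{\psi_i}\phi)(x(t))\,w_i(t)$, and integrating from $0$ to $s$ yields the fundamental identity
\[
\phi(x(s))=\phi(x^0)+\sum_{i=0}^{\ell}\int_0^s (L_{\psi_i}\phi)(x(r))\,w_i(r)\,dr .
\]
The entire proof consists of applying this identity at three successive orders and collecting terms.

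Applying the identity with $\phi=\mathrm{id}$ (so that $L_{\psi_i}\mathrm{id}=\psi_i$) gives the first-order representation $x(t)=x^0+\sum_{i=0}^{\ell}\int_0^t\psi_i(x(s_1))w_i(s_1)\,ds_1$. Next I would substitute the same identity, now applied to each $\phi=\psi_{i_1}$, into this integral; this separates off the terms $\psi_i(x^0)\int_0^t w_i(s_1)\,ds_1$ and produces a double integral whose integrand is $(L_{\psi_{i_2}}\psi_{i_1})(x(s_2))$ over the time-ordered region $t\ge s_1\ge s_2\ge 0$. A third application of the identity, this time to $\phi=L_{\psi_{i_2}}\psi_{i_1}$, freezes the second-order coefficient at $x^0$ and leaves precisely the triple-integral remainder $\rho(t)$ with integrand $(L_{\psi_{i_3}}L_{\psi_{i_2}}\psi_{i_1})(x(s_3))$ over $t\ge s_1\ge s_2\ge s_3\ge 0$, matching the stated formula. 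Since all index sums are finite, interchanging summation and integration at each step is automatic.

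I do not expect a genuine obstacle; the only point requiring care is the regularity count, which is exactly where the hypothesis $\psi_i\in C^2$ is used. Because $\psi_{i_1}\in C^2$, the second-order coefficient $L_{\psi_{i_2}}\psi_{i_1}=\tfrac{\partial\psi_{i_1}}{\partial x}\psi_{i_2}$ is of class $C^1$, so the fundamental identity may legitimately be applied to it in the third step; the resulting remainder coefficient $L_{\psi_{i_3}}L_{\psi_{i_2}}\psi_{i_1}$ is then merely continuous, which is enough for the triple integral to be well defined along the continuous curve $s_3\mapsto x(s_3)$. Thus $C^2$ regularity is precisely what is needed to make every Lie derivative through second order differentiable and the remainder integrand continuous.
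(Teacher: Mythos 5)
Your proof is correct. Note that the paper itself gives no proof of this lemma---it is stated as a citation of \cite[S.~40.2]{La95} and \cite[Lemma~5.1]{ZG17}---and your iterated application of the fundamental identity $\phi(x(s))=\phi(x^0)+\sum_{i=0}^{\ell}\int_0^s (L_{\psi_i}\phi)(x(r))\,w_i(r)\,dr$ (with $\phi=\mathrm{id}$, then $\phi=\psi_{i_1}$, then $\phi=L_{\psi_{i_2}}\psi_{i_1}$) is precisely the standard derivation used in those references, with the regularity bookkeeping ($C^2$ making the second-order coefficient $C^1$ and the remainder integrand continuous) handled correctly.
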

}

If the vector fields $g_0$, $g_k^I$ are of class $C^2$ and the functions $\phi_k^{I,\varepsilon} (t)$ are defined by~\eqref{phi_general}, then  the  Chen--Fliess expansion {from Lemma~1} for the solution $x(t)\in D$ of~\eqref{closed-loop}, $t\in [0,\varepsilon]$, with the initial data $x(0)=x^0\in D$   can be expressed {at $t=\varepsilon$ as follows~(see formula~(77) in~\cite{ZG17})}:
\begin{equation}\label{Chen-Fliess}
\begin{aligned}
&x(\varepsilon) = x^0 + \varepsilon  g_0(x^0) + \frac{\varepsilon^2}{2} L_{g_0}g_0(x^0)- \varepsilon^{3/2} {\gamma}\sqrt{\pi} \\
&\ \times \sum_{{I=(ij)\in S}} \frac{[{g_0},g_j^I](x^0)}{\sqrt{\varkappa_{I}}} + \varepsilon {\gamma}^2 \sum_{{I=(ij)\in S}} [g_i^I,g_j^I](x^0)\\
&\qquad + O(\varepsilon^{3/2})= x^0 + \varepsilon \bigl( g_0(x^0) \\
&+{\gamma}^2 \sum_{{I=(ij)\in S}} [g_i^I,g_j^I](x^0) \bigr) + \varepsilon^{3/2} R(x^0,\varepsilon,\gamma).
\end{aligned}
\end{equation}
{Assumption~\eqref{nonres} is crucial in the derivation of~\eqref{Chen-Fliess} to avoid resonances in the Lie bracket approximations and ensure orthogonality between oscillatory components with different index pairs in $S$.}
For each compact $ D'\subset D$ and each $\gamma>0$, there is a $\tilde\varepsilon>0$ such that $\|R(x^0,\varepsilon,\gamma)\|\le C_R$ for all $(x^0,\varepsilon)\in \tilde D\times (0,\tilde\varepsilon]$, where the constant $C_R$ can be obtained from Lemma~5.1 of~\cite{ZG17}.
The Lie brackets appearing in~\eqref{Chen-Fliess} can also be represented in terms of the original vector fields $f_i$, $f_j$ with $(ij)=I$:
\begin{equation}\label{brackets_fg}
[g_i^I,g_j^I] = v_i^I v_j^I [f_i,f_j] + v_i^I f_j \nabla v_j^I f_i -  v_j^I f_i \nabla v_i^I f_j,
\end{equation}
under the convention that the gradient $\nabla v_j^I(x)$ is a row vector.
Formula~\eqref{Chen-Fliess} allows to express the increment of $V(x(t))$ along the trajectories of~\eqref{closed-loop}:
\begin{equation}
    \label{V_increment}
    \frac{V(x(\varepsilon)) - V(x^0)}{\varepsilon} = W(x^0) + \varepsilon^{1/2} r(x^0,\varepsilon,\gamma),
\end{equation}
where
\begin{equation}
    \label{W_gen}
    W(x) = L_{g_0}V(x) + \gamma^2 \sum_{I=(ij)\in S} L_{[g_i^I,g_j^I]}V(x),
\end{equation}
and $r(x^0,\varepsilon,\gamma)$ is a bounded remainder which can be estimated in terms of $R(x,\varepsilon,\gamma)$ and the  derivatives of $V$. A key idea of our control design is to define the functions $v^I_k(x)$ and gain $\gamma$ to ensure that $W(x)$ is negative definite.
The main result in this area is summarized below.
\begin{theorem}\label{prop_general}
{\em Let the function $V:D\to \mathbb R$ and vector fields $g_0, g^I_k: D\to{\mathbb R}^n$, $k {\in\{1,\dots,m\}}$, $I\in S$, be twice continuously differentiable in $D$,  $\gamma$ be a positive constant, $D'\subseteq D$ be a closed bounded domain such that $0\in {\textrm int} \, D'$,  and let the following conditions be satisfied:
\begin{itemize}
\item[i)] $V(x)$ is positive definite in $D'$;
\item[ii)] $W(x)$ given by~\eqref{W_gen} is {negative definite} in $D'$;
\item[iii)] there exist an $\bar\varepsilon>0$ and a $c_r>0$ such that
$$
\begin{aligned}
|r(x,\varepsilon,\gamma)|\le -c_r W(x)\;\;\text{for all}\;x\in D',\;\varepsilon\in (0,\bar\varepsilon];
\end{aligned}
$$
\item[iv)] $g_0(0)=0$ and $g^I_k(0)=0$ for all $k {\in\{1,\dots,m\}}$, $I\in S$;
\end{itemize}
Then there exists a $\delta>0$ such that each solution $x(t)$ of system~\eqref{closed-loop}  with the initial condition $x(0)\in B_\delta(0)\subset D'$ tends to zero as $t\to+\infty$, provided that the time-varying functions $\phi_k^{I,\varepsilon}$ are given by~\eqref{phi_general} under assumption~\eqref{nonres}, and the parameter   $\varepsilon>0$ is {sufficiently small}.}
\end{theorem}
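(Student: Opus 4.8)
The plan is to treat the time-$\varepsilon$ map as a discrete dynamical system and to use $V$ as a strict Lyapunov function for it. First I would observe that, because $\omega=2\pi/\varepsilon$ and each $\varkappa_I$ is an integer, every function $\phi_k^{I,\varepsilon}$ in \eqref{phi_general} is $\varepsilon$-periodic, so the closed-loop system \eqref{closed-loop} is $\varepsilon$-periodic in $t$. Consequently the stroboscopic map $x_k:=x(k\varepsilon)\mapsto x_{k+1}:=x((k+1)\varepsilon)$ is autonomous, and the increment formula \eqref{V_increment}, derived on $[0,\varepsilon]$, applies verbatim on each interval $[k\varepsilon,(k+1)\varepsilon]$ with $x^0$ replaced by $x_k$:
\[
V(x_{k+1})-V(x_k)=\varepsilon W(x_k)+\varepsilon^{3/2}r(x_k,\varepsilon,\gamma).
\]
Using condition~iii) in the form $\varepsilon^{3/2}r\le\varepsilon^{3/2}|r|\le-\varepsilon^{3/2}c_r W(x_k)$ together with condition~ii), this yields
\[
V(x_{k+1})-V(x_k)\le\varepsilon\bigl(1-\sqrt{\varepsilon}\,c_r\bigr)W(x_k),
\]
which is strictly negative for $x_k\neq0$ as soon as $\varepsilon<c_r^{-2}$ (and $\varepsilon\le\min\{\bar\varepsilon,\tilde\varepsilon\}$, so that \eqref{V_increment} and iii) hold). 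Thus $V$ is a strict Lyapunov function along the sampled sequence.

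Second, I would fix the neighborhood and verify forward invariance. Choosing a level $c_0>0$ so that $\Omega:=\{x\in D':V(x)\le c_0\}$ is compact and contained in $\mathrm{int}\,D'$ (possible by i)), and taking $\delta$ with $B_\delta(0)\subset\Omega$, the monotonicity $V(x_{k+1})\le V(x_k)$ keeps all sample points in $\Omega$ once $x_0\in\Omega$. To control the trajectory between samples — the delicate point, since the oscillatory amplitudes in \eqref{phi_general} blow up like $\varepsilon^{-1/2}$ — I would reuse the Chen--Fliess expansion of Lemma~1 evaluated at a general $t\in[k\varepsilon,(k+1)\varepsilon]$. The only $\varepsilon^{-1/2}$-amplitude contributions enter through the single integrals $\int_{k\varepsilon}^{t}\phi_k^{I,\varepsilon}$, which a direct computation shows to be $O(\sqrt{\varepsilon})$; invoking iv), which gives $\|g_0(x)\|,\|g_k^I(x)\|\le L\|x\|$ on $\Omega$, one obtains a uniform excursion bound $\sup_{t\in[k\varepsilon,(k+1)\varepsilon]}\|x(t)-x_k\|\le C\sqrt{\varepsilon}\,\|x_k\|$ (modulo a standard bootstrap using continuity of the flow to remove the circular dependence on the trajectory staying in $D'$). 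For $\varepsilon$ small this confines the whole continuous trajectory to a compact subset of $\mathrm{int}\,D'$, so Lemma~1 remains applicable at every step.

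Third, I would extract attractivity. The sequence $V(x_k)$ is nonincreasing and bounded below by $0$, hence convergent; telescoping the decrease inequality gives $\sum_{k\ge0}\bigl(-W(x_k)\bigr)<\infty$, so $W(x_k)\to0$. Since $W$ is continuous and, by ii), negative definite on the compact set $\Omega$, this forces $x_k\to0$ (otherwise a subsequence bounded away from $0$ would keep $-W$ bounded below by a positive constant). Finally the excursion bound transfers convergence from the samples to continuous time: $\sup_{t\in[k\varepsilon,(k+1)\varepsilon]}\|x(t)\|\le(1+C\sqrt{\varepsilon})\|x_k\|\to0$, whence $x(t)\to0$ as $t\to+\infty$.

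I expect the main obstacle to be exactly the inter-sample analysis of the second step. Deriving the sampled decrease is essentially a rewriting of \eqref{V_increment}, but the controls \eqref{phi_general} carry amplitude of order $\varepsilon^{-1/2}$, so one must show carefully — again through the Chen--Fliess series, now at intermediate times and exploiting the structural vanishing condition~iv) — that the large-amplitude oscillations produce only $O(\sqrt{\varepsilon})$ net excursions that shrink with $\|x_k\|$. This single estimate is what simultaneously guarantees invariance of $\Omega$ (so that the increment formula may be iterated) and upgrades convergence of the stroboscopic sequence to convergence of the classical solution.
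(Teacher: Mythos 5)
Your proposal is correct and follows essentially the same route as the paper's proof: treat the closed-loop dynamics stroboscopically, apply the increment formula~\eqref{V_increment} with conditions ii)--iii) to obtain the strict decrease $V(x_{k+1})\le V(x_k)+\varepsilon\bigl(1-\sqrt{\varepsilon}\,c_r\bigr)W(x_k)$ for $\varepsilon<c_r^{-2}$, use condition iv) to bound inter-sample excursions proportionally to $\|x_k\|$, and transfer convergence of the samples to the classical solution. The only cosmetic differences are that the paper gets the excursion bound $\|x(t)\|\le c_x\|x^0\|$ with $c_x=e^{L_0\varepsilon+L_g c_\phi\sqrt{\varepsilon}}$ directly from the crude amplitude bound $|\phi_k^{I,\varepsilon}|\lesssim\varepsilon^{-1/2}$ plus the Gr\"onwall--Bellman inequality (no oscillatory cancellation or bootstrap needed, which resolves the concern you flag in your second step), and it establishes $V(x(j\varepsilon))\to 0$ by a contradiction argument on a level set rather than by your telescoping-summability argument.
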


\begin{proof}
Given a function $V$ satisfying condition~i), denote its level set as
$
\mathcal L_c=\{x\in D:V(x)\le c\}$ for $c\ge 0$.
Let $d>0$ and $\delta>0$ be such that
$
\overline{B_\delta(0)}\subset \mathcal L_d\subset {\textrm int}\, D'
$,
where $\overline{B_\delta (0)}$ denotes the closed ball of radius $\delta$ centered at~$0$.

Let $\varepsilon_0\in (0,\bar\varepsilon]$ be small enough to guarantee that the solutions of system~\eqref{closed-loop} with $x(0)=x^0\in \overline{B_\delta(0)}$ are well-defined in  $D'$ on the time interval $t\in [0,\varepsilon_0]$. Throughout the proof, we assume $\varepsilon\in(0,\varepsilon_0]$.

The first step of the proof involves ensuring that the solutions of system~\eqref{closed-loop} with $x^0\in  \mathcal L_d$ and a sufficiently small $\varepsilon$ evolve in $D'$ for $t\in[0,\varepsilon]$, and
obtaining some a priori estimates.
Denote
$L_0{=}\sup\limits_{x\in D'}\left\|\frac{\partial g_0(x)}{\partial x}\right\|,\,L_g{=}\max\limits_{I\in S,k\in\{1,\dots,m\}}\sup\limits_{x\in D'}\left\|\frac{\partial g_k^I(x)}{\partial x}\right\|,$
then,  taking into account  $ g_0(0){=}g_k^I(0){=}0$ in~iv), we obtain
\begin{equation}
\label{g_estimate}
  \|g_0(x)\|\le L_0\|x\|,\,\|g_k^I(x)\|\le L_g\|x\|,
\end{equation}
for all $x\in D'$, $I\in S$, $k\in\{1,\dots,m\}$. Furthermore, note that
$
\max\limits_{t\in[0,\varepsilon]}\sum\limits_{I\in S}\sum\limits_{k=1}^m|\phi_k^{I,\varepsilon}(t)|\le{c_\phi}\varepsilon^{-1/2}
$
with $c_\phi=2\sqrt{2\pi}\sum_{I\in S}\sqrt{\varkappa_I}$.
From the integral representation of solutions of system~\eqref{closed-loop},
$$
x(t)=x^0+\int_0^t\big(g_0(x(\tau)) + \gamma \sum_{I\in S}\sum_{k=1}^m g_k^I(x(\tau))\phi_k^{I,\varepsilon} (\tau)\big)d\tau.
$$
Thus,  for any $t\in[0,\varepsilon]$,
$
\|x(t)\|\le \|x^0\|+\Big(L_0+{L_g c_\varphi}{\varepsilon}^{-1/2}\Big)\int_0^t\|x(\tau)\|d\tau.
$
Then the  Gr\"onwall--Bellman inequality implies
\begin{equation}
    \label{x_estimate}
    \|x(t)\|\le c_x\|x^0\| \text{ for all }t\in[0,\varepsilon],
\end{equation}
where $c_x=e^{L_0\varepsilon+L_g c_\phi\sqrt\varepsilon}$. Taking $\varepsilon_1>0$ as the minimum of $\varepsilon_0$ and the positive solution of the equation $L_0\varepsilon+L_g c_\phi\sqrt\varepsilon=\ln\frac{\sup_{x\in D'}\|x\|}{\sup_{x\in\mathcal L_d}\|x\|}$, we conclude that the solutions of system~\eqref{closed-loop} with $x^0\in  \mathcal L_d$ and $\varepsilon\in(0,\varepsilon_1]$ stay in $D'$ for all $t\in [0,\varepsilon]$.

The next step is to show that there exists an $\varepsilon_2\in (0,\varepsilon_1]$ such that  the function $V$ satisfies the property $V(x(\varepsilon))<V(x^0)$
along the solutions of system~\eqref{closed-loop} with $\varepsilon\in (0,\varepsilon_2)$ . For this purpose, we exploit  representation~\eqref{V_increment} and assumptions ii)--iii). Namely, let the constants $\bar\varepsilon$ and $c_r>0$ be defined from the assumptions ii) and iii). Then  representation~\eqref{V_increment} implies
\begin{equation}\label{V_decr}
V(x(\varepsilon))\le V(x^0)+\varepsilon W(x^0)(1-\sqrt\varepsilon c_r).
\end{equation}
Putting $\varepsilon_2={\textrm min}\{ c_r^{{-2}},\varepsilon_1\}$, we ensure
$V(x(\varepsilon))<V(x^0)$ for
any $x^0\in\mathcal L_d\setminus\{0\}$ and
any $\varepsilon\in(0,\varepsilon_2)$. In particular, $x(\varepsilon)\in \mathcal L_d$.

Repeating the above steps for $x(\varepsilon)$, $x(2\varepsilon)$, \dots, we conclude that $x(t)\in D'$ for all $t\in[0,\infty)$ and obtain the monotonically decreasing sequence $\{V(x(j\varepsilon))\}_{j\in\mathbb N}$ with $\inf_j V(x(j\varepsilon))={V_0\ge 0}$.
{
To prove that $V_0=0$, assume the contrary: let $V_0>0$, then the level set ${\cal M}_0=\{x\in D\,\vert \, V(x)=V_0\} \subset {\cal L}_{V(x^0)}$ is compact and separated from $0$, since  $V$ is positive definite.
By applying inequality~\eqref{V_decr}, we conclude that $\lim_{j\to \infty}(V (x(j\varepsilon))-V( x( (j-1)\varepsilon ) ) )\le \varepsilon (1-\sqrt{\varepsilon} c_r) \sup_{\xi\in {\cal M}_0} W(\xi)<0$ because $W$ is negative definite.
The latter inequality implies that $V (x(j\varepsilon))\to -\infty$ as $j\to \infty$,
which contradicts the property $V(x)\ge 0$.
This contradiction  proves that $V_0=0$.
}
This implies $\lim_{j\to\infty}V(x(j\varepsilon))=0$, and therefore, $\lim_{j\to\infty}\|x(j\varepsilon)\|=0$ because of the continuity and positive definiteness of $V$. Together with~\eqref{x_estimate}, this yields $\lim_{t\to\infty}\|x(t)\|=0$.


\end{proof}

{
{\em Remark.}
As it can be seen in the proof of Theorem~1, the convergence of classical solutions to zero holds for any $x(0)\in B_\delta(0)$ and any $\varepsilon\in (0,\varepsilon_2)$, where $\varepsilon_2>0$ is defined in the proof above.}

The next theorem justifies the applicability of the controls, proposed previously in papers~\cite{ZuSIAM,ZG17,GZ23,grushkovskaya2024design} in the context of sampling, for stabilizing the classical solutions.
\begin{theorem}
\textit{{Let assumption~\eqref{bracket_gen} be satisfied, and let $F(x)=\left(f_1(x),\dots , f_m(x),  f_I(x)_{I\in S}\right)$ denote the $n\times n$-matrix whose columns are composed of  the vector fields from~\eqref{bracket_gen}.}
Given a positive definite function $V\in C^2(D)$ such that $\nabla V(x)=0$ only at $x=0$,
{define the vector function $\tilde v:D\to {\mathbb R}^n$
by the rule
{\small
\begin{equation}\label{v_form0}
\tilde v(x)=(v_1^0(x), \dots v_m^0(x), {\tilde v}^I(x)_{I\in S})^\top=-F^{-1}(x)(\nabla V(x))^\top,
\end{equation}}
and define the functions  $v^I_k$ for each $I\in S$ in~\eqref{feedback_general} as follows:
%
\begin{equation}\label{v_form}
\begin{aligned}
& v_i^I(x)=\sqrt{|{\tilde v}^I(x)|},\\
& v_j^I(x)=\sqrt{|{\tilde v}^I(x)|}{\textrm sign}({\tilde v}^I(x))\;\;\text{for}\;\; (ij)=I,\\
&v_k^{I} (x) = 0\;\; \text{for all}\;\; k\notin \{i,j\},\; (ij)=I.
\end{aligned}
\end{equation}}
Assume that there exists a $\gamma>0$ and a closed bounded domain $D'\subseteq D$ with $0\in {\textrm int}\, D'$ such that the following conditions hold:
\begin{itemize}
    \item[c1)] $\sup\limits_{x\in D'\setminus\{0\}}\dfrac{\nabla V(x)\Phi(x,\gamma)}{\|\nabla V(x)\|^2}<1$, where
$$
\begin{aligned}
&\hspace{-2em}\Phi(x,\gamma)=\sum\limits_{I=(ij)\in S}\Big((\gamma^2-1)[f_i^I,f_j^I](x)\tilde v^I(x)+\gamma^2f_j(x)\\
&\times\nabla|\tilde v^I(x)|f_i(x){\textrm sign}(\tilde v^I(x))-f_i(x)\nabla|\tilde v^I(x)|f_j(x)\Big);
\end{aligned}
$$
\item [c2)] $\sup\limits_{x\in D'}\dfrac{|r(x,\varepsilon,\gamma)|}{\|\nabla V(x)\|^2}<\infty$ for all $\varepsilon\in (0,\bar\varepsilon]$ for some $\bar \varepsilon>0$, where $r(x,\varepsilon,\gamma)$ is the remainder in representation~\eqref{V_increment}.
\end{itemize}
Then, there exists a $\delta>0$ such that each solution $x(t)$ of {the closed-loop system~\eqref{sys_affine},~\eqref{feedback_general}} with the initial condition $x(0)\in B_\delta(0)\subset D'$ tends to zero as $t\to+\infty$, provided that {the components of~\eqref{feedback_general} are given by~\eqref{v_form0},~\eqref{v_form} and}~\eqref{phi_general} under assumption~\eqref{nonres}, and that the parameter $\varepsilon>0$ is sufficiently small.}
\end{theorem}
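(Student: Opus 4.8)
The plan is to deduce the statement from Theorem~\ref{prop_general} by showing that the explicit construction~\eqref{v_form0}--\eqref{v_form} realizes the abstract hypotheses i)--iv) of that theorem, with c1) and c2) playing the roles of ii) and iii). First I would extract the algebraic identity hidden in~\eqref{v_form0}: rewriting $F(x)\tilde v(x)=-(\nabla V(x))^\top$ column by column and using $g_0=\sum_k v_k^0 f_k$ together with $f^I=[f_i,f_j]$ gives $g_0(x)+\sum_{I\in S}\tilde v^I(x) f^I(x)=-(\nabla V(x))^\top$. Contracting this with the row vector $\nabla V(x)$ yields the crucial relation $L_{g_0}V(x)+\sum_{I\in S}\tilde v^I(x)\,L_{f^I}V(x)=-\|\nabla V(x)\|^2$, which is what ties the time-independent part $v^0$ to the gradient flow of $V$.

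The computational core is then to evaluate $W$ from~\eqref{W_gen} for this choice of coefficients. The square-root rule~\eqref{v_form} is engineered precisely so that $v_i^I v_j^I=\tilde v^I$, recovering the correct leading Lie-bracket coefficient, while a short computation gives $v_i^I\nabla v_j^I=v_j^I\nabla v_i^I=\tfrac12\,\mathrm{sign}(\tilde v^I)\nabla|\tilde v^I|$ for the state-dependent correction terms. Substituting these into the bracket expansion~\eqref{brackets_fg}, applying $L_{[g_i^I,g_j^I]}V=\nabla V[g_i^I,g_j^I]$, and eliminating $L_{g_0}V$ through the identity above, I would arrive at the compact form $W(x)=-\|\nabla V(x)\|^2+\nabla V(x)\,\Phi(x,\gamma)$ with $\Phi$ as defined in c1).

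With $W$ in this form the remaining verifications are routine. Hypothesis i) is assumed outright, and iv) follows because positive definiteness forces $\nabla V(0)=0$, hence $\tilde v(0)=0$ and thus $v_k^0(0)=v_k^I(0)=0$, giving $g_0(0)=g_k^I(0)=0$. For ii), dividing by $\|\nabla V(x)\|^2>0$ (legitimate off the origin, since $\nabla V$ vanishes only at $0$) gives $W(x)/\|\nabla V(x)\|^2=-1+\nabla V(x)\Phi(x,\gamma)/\|\nabla V(x)\|^2\le-\eta<0$, where $\eta=1-\sup_{D'\setminus\{0\}}(\cdot)>0$ by c1); hence $W$ is negative definite. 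For iii), c1) yields $-W(x)\ge\eta\|\nabla V(x)\|^2$ while c2) bounds $|r(x,\varepsilon,\gamma)|\le M\|\nabla V(x)\|^2$, so $|r|\le(M/\eta)(-W)$ and one may take $c_r=M/\eta$. Theorem~\ref{prop_general} then supplies the required $\delta>0$ and the convergence $x(t)\to0$.

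The main obstacle I anticipate is regularity. The feedback components $v_i^I=\sqrt{|\tilde v^I|}$ are built from an absolute value and a square root, and are in general not even $C^1$ at points where $\tilde v^I$ vanishes or changes sign, whereas Theorem~\ref{prop_general} demands $g_0,g_k^I\in C^2(D')$. In particular, the computation of $\nabla|\tilde v^I|$ — and therefore the very formula for $\Phi$ — is only valid on the set where $\tilde v^I\neq0$ and $\mathrm{sign}(\tilde v^I)$ is locally constant. Making the argument rigorous thus requires either restricting attention to a domain on which each $\tilde v^I$ is sign-definite away from the origin, or separately controlling $W$ and $r$ across the zero set of the $\tilde v^I$; reconciling this with the $C^2$-smoothness needed for the Chen--Fliess estimate of Lemma~1 is the delicate step.
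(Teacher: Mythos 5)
Your proposal is correct and follows essentially the same route as the paper's own proof: the paper likewise uses \eqref{v_form0}, \eqref{v_form} and the bracket expansion \eqref{brackets_fg} to reduce \eqref{W_gen} to the compact form $W(x)=-\|\nabla V(x)\|^2+\nabla V(x)\,\Phi(x,\gamma)$, then invokes c1) and c2) to get $V(x(\varepsilon))\le V(x^0)-\varepsilon\|\nabla V(x^0)\|^2\bigl(1-\sigma-\sqrt{\varepsilon}\,\tilde c_r\bigr)$ from \eqref{V_increment}, and finishes by the argument of Theorem~\ref{prop_general} (your explicit check of condition iv) via $\nabla V(0)=0\Rightarrow\tilde v(0)=0$ is merely left implicit there). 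Your closing caveat about the failure of $C^2$ regularity of $\sqrt{|\tilde v^I|}$ at zeros of $\tilde v^I$ is a genuine issue that the paper's proof also glosses over; the paper only addresses it outside the proof, in the case study and Conclusions, by choosing the Lyapunov exponent $p$ so that the resulting feedback is sufficiently smooth.
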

\begin{proof}
The proof is similar to the proof of Theorem~1.
Under the assumptions of Theorem~2, the function $W$ defined by~\eqref{W_gen} can be represented  as
$$
\begin{aligned}
    W(x)&=\nabla V(x)\Big(g_0(x)+ \gamma^2\sum_{I=(ij)\in S}\big([f_i^I,f_j^I](x)\tilde v^I(x)\\
  +f_j(x)&\nabla|\tilde v^I(x)|f_i(x){\textrm sign}(\tilde v^I(x))-f_i(x)\nabla|\tilde v^I(x)|f_j(x)\Big)\\
    &= \nabla V(x)F(x)\tilde v(x)+\nabla V(x){ \Phi(x,\gamma)}\\
    &=-\|\nabla V(x)\|^2+\nabla V(x) \Phi(x,\gamma).
\end{aligned}
$$
By c1), there exist $\gamma>0$ and $\sigma\in [0,1]$, such that
 $$
 W(x)\le -(1-\sigma)\|\nabla V(x)\|^2\text{ in }D'.
 $$
Additionally,  c2) implies  $|r(x,\varepsilon,\gamma)|\le \tilde c_r \|\nabla V(x)\|^2 $ in $D'$. Thus, representation~\eqref{V_increment} yields
$$
V(x(\varepsilon))\le V(x^0)-\varepsilon\|\nabla V(x^0)\|^2 \big(1-\sigma-\sqrt\varepsilon\tilde c_r\big).
$$
If $\varepsilon\in\big(0,{\textrm min}\{((1-\sigma)/\tilde c_r)^2,\bar \varepsilon\}\big)$, then $V(x(\varepsilon))<V(x^0)$ for $x(\varepsilon)\ne x^0$, and the rest of the proof goes along the same line as the proof of Theorem~1.
\end{proof}

Under more general assumptions, the condition~ii) of Theorem~1 can be established by the following technical lemma.

\begin{lemma}
\emph{Let a function $V: D\to\mathbb R$ be differentiable such that $\nabla V(0)=0$. Assume that:
\begin{itemize}
  \item[a)] $
    \alpha(x):=L_{g_0}V(x)  \le 0$ for all $x\in D$;
\item[b)] $\beta(x) := \sum\limits_{I=(ij)\in S} L_{[g_i^I,g_j^I]}V(x)<0$ for all $x\in M_0\setminus\{0\}$,
where
$
M_0 = \{x\in D\, \vert\, \alpha(x)=0\};
$
\item[c)]
$\sup\limits_{x \in D \setminus M_0} \left\{-\dfrac{\beta(x)}{\alpha(x)}\right\} < \infty.$
\end{itemize}
  Then there exists a $\gamma>0$ such that the function $W(x)$ defined by~\eqref{W_gen} is negative definite. }
\end{lemma}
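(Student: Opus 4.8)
The plan is to use that $W(x)=\alpha(x)+\gamma^2\beta(x)$ is affine in the single parameter $\gamma^2$, and to choose $\gamma$ small enough that the possibly positive term $\gamma^2\beta$ can never overcome the strictly negative $\alpha$ off the set $M_0$, while on $M_0$ the term $\gamma^2\beta$ alone guarantees negativity. First I would check that $W$ vanishes at the origin and that $0\in M_0$: since $\nabla V(0)=0$, both $\alpha(0)=\nabla V(0)\,g_0(0)=0$ and $\beta(0)=\sum_{I=(ij)\in S}\nabla V(0)\,[g_i^I,g_j^I](0)=0$, so $W(0)=0$ and $0\in M_0$. Consequently $D$ is the disjoint union of $\{0\}$, $M_0\setminus\{0\}$ and $D\setminus M_0$, and it remains to verify $W(x)<0$ on the latter two sets.

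Next I would set $K:=\sup_{x\in D\setminus M_0}\{-\beta(x)/\alpha(x)\}$, which is finite by c), and fix any $\gamma>0$ with $\gamma^2K<1$ (any $\gamma>0$ works if $K\le 0$). On $M_0\setminus\{0\}$ we have $\alpha(x)=0$, hence $W(x)=\gamma^2\beta(x)<0$ directly from b), for every admissible $\gamma$. On $D\setminus M_0$, assumption a) forces $\alpha(x)<0$, so I would factor out $|\alpha(x)|=-\alpha(x)>0$ and estimate
\[
W(x)=|\alpha(x)|\Bigl(-1-\gamma^2\tfrac{\beta(x)}{\alpha(x)}\Bigr)\le |\alpha(x)|\bigl(-1+\gamma^2 K\bigr)<0,
\]
using $-\beta(x)/\alpha(x)\le K$ and $\gamma^2K<1$. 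Combining the two cases with $W(0)=0$ shows that $W$ is negative definite on $D$, and therefore on any closed bounded $D'\subseteq D$, which is exactly condition~ii) of Theorem~\ref{prop_general}.

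The hard part is the uniform choice of $\gamma$. Pointwise on $\{x\in D\setminus M_0:\beta(x)>0\}$ one only needs $\gamma^2<-\alpha(x)/\beta(x)$, but this threshold depends on $x$ and could a priori degenerate to $0$ as $x$ approaches the boundary of $M_0$, where $\alpha(x)\to 0$; then no single $\gamma$ would work. Assumption c) is precisely the hypothesis that prevents this blow-up by bounding the ratio $-\beta/\alpha$ uniformly, and assumption b) handles exactly the locus where the factorization above degenerates. I would stress that, thanks to the way a)--c) are formulated, no compactness or continuity argument is needed: the conclusion follows from a purely sign-and-bound estimate.
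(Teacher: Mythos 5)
Your proof is correct and follows essentially the same route as the paper's: write $W=\alpha+\gamma^2\beta$, use b) directly on $M_0\setminus\{0\}$, and use the finite supremum from c) to pick $\gamma^2<1/K$ (with the same dichotomy $K\le 0$ versus $K>0$ that the paper phrases as Cases 1 and 2). Your version merely spells out the factorization estimate and the check that $\beta(0)=0$ in slightly more detail than the paper does.
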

\begin{proof}
    Indeed, under assumption~b), the function
    $
    W(x)=\alpha(x)+\gamma^2\beta(x)
    $
is strictly negative in $M_0\setminus\{0\}$ for each $\gamma\neq 0$, and $W(0)=0$ because $\nabla V(0)=0$.
It remains to show that $W(x)<0$ for all $x\in D\setminus M_0$.
Let $c_{\alpha\beta}=\sup\limits_{x \in D\setminus M_0} \left\{-\frac{\beta(x)}{\alpha(x)}\right\} <\infty$.
We consider the two possible cases.

Case~1: $c_{\alpha\beta}\le 0$. Then $W(x)<0$ for all real $\gamma$ and all $x\in D\setminus M_0$.

Case~2. $c_{\alpha\beta}> 0$. In this case, $W(x)<0$ for all $x\in D\setminus M_0$, provided that $\gamma^2<1/c_{\alpha\beta}$.

\end{proof}

%
%

We will illustrate the proposed control design scheme with a multidimensional nonholonomic system in the following section.

\section{CASE STUDY}\label{secex}



Consider the following  underactuated nonlinear control system:
\begin{equation}\label{brockett}
\dot x = \sum_{j=1}^4 u_j f_j(x),\quad x\in{\mathbb R}^{10},\; u \in {\mathbb R}^4,
\end{equation}
where $u =(u_1,u_2,u_3,u_4)^\top$, $x=(x_1,x_2,\dots,x_{10})^\top$,
$$
\begin{aligned}
f_1(x) &= (1, 0, 0, 0,-x_2, -x_3,-x_4, 0,0,0 )^\top,\\
f_2(x) &= ( 0, 1, 0, 0, x_1, 0, 0, -x_3, -x_4, 0)^\top,\\
f_3(x) &= (0, 0, 1, 0, 0, x_1, 0, x_2, 0, -x_4 )^\top,\\
f_4(x) &= (0, 0, 0, 1, 0, 0, x_1, 0, x_2, x_3 )^\top.
\end{aligned}
$$
System~\eqref{brockett} belongs to the class of nilpotent systems described by~R.~W.~Brockett in~\cite{Bro82}. {This nilpotent property means that all iterated Lie brackets of the vector fields $f_j$ (i.e., the Lie brackets of length greater than 2)  vanish. Thus, the Chen--Fliess expansion of the solutions of~\eqref{brockett} contains only terms with $f_j$ and their mutual Lie brackets, without any higher-order terms.}
It is easy to show that system~\eqref{brockett} satisfies the controllability rank condition with the Lie brackets $f^{(ij)}(x):=[f_i,f_j](x)$ for $
(ij)\in S$,
$
S=\{(1 2),\,(1 3),\,(1 4),\,(2 3),\,(2 4),\,(3 4)\}.
$
Indeed,
$f^{(12)}(x) = 2e_5$, $f^{(13)}(x) =  2e_6$,
$f^{(14)}(x) =  2e_7$, $f^{(23)}(x) =  2e_8$,
$f^{(24)}(x) =  2e_9$, $f^{(34)}(x) =  2e_{10}$,
where $e_j\in\mathbb R^{10}$ denotes the unit column vector with $1$  at the $j$-th entry.
Thus,
${\textrm span} (f_1(x),\dots,f_4(x), f^I(x)_{I\in S})={\mathbb R}^{10}
\;\text{for each}\; x\in \mathbb R^{10}.
$
System~\eqref{brockett} is not stabilizable by a continuous time-invariant feedback law of the form $u=h(x)$ because it does not satisfy Brockett's necessary condition~\cite{Bro83}.
It should be mentioned that Brockett's condition remains necessary for a class of partial stabilization problems~\cite{Zu99}, which poses challenges for stabilizing control design with respect to certain variables.
Note that system~\eqref{brockett} is nilpotent, so that higher-order terms with iterated Lie brackets in the Chen--Fliess expansion of~\eqref{brockett} vanish.

We take the Lyapunov function candidate as
$
V(x) = \frac12\sum\limits_{j=1}^4x_j^2+\frac{1}{2{p}}\sum\limits_{j=5}^{10}x_j^{2{p}},\,{p}\ge 1,
$
and define the following time-varying feedback control in the form~\eqref{feedback_general}:
\begin{equation}\label{brockett_feedback}
\begin{aligned}
&u_1 {= }v_1^{0}(x) + \gamma \sum_{k=2}^4v_1^{(1k)} (x) \phi_1^{(1k),\varepsilon} (t),\\
&u_2 {=} v_2^{0}(x) + \gamma \sum_{k=3}^4v_2^{(2k)} (x) \phi_2^{(2k),\varepsilon} (t) + \gamma v_2^{(12)} (x) \phi_2^{(12),\varepsilon} (t),\\
&u_3 {=} v_3^{0}(x)  + \gamma \sum_{k=1}^2v_3^{(k3)} (x) \phi_3^{(k3),\varepsilon} (t)+ \gamma v_3^{(34)} (x) \phi_3^{(34),\varepsilon} (t),\\
&u_4 {=} v_4^{0}(x) + \gamma \sum_{k=1}^3v_4^{(k4)} (x) \phi_4^{(k4),\varepsilon} (t),
\end{aligned}
\end{equation}
where {$v_k^{0}(x) = - x_k$, $k\in\{1,2,3,4\}$, and $\phi_k^{I,\varepsilon}(t)$ are defined by formulas~\eqref{phi_general} with $\varkappa_{(12)}=1$, $\varkappa_{(13)}=2$, $\varkappa_{(14)=3}$, $\varkappa_{(23)}=4$, $\varkappa_{(24)}=5$, $\varkappa_{(34)}=6$.}
The resulting closed-loop system~\eqref{brockett},~\eqref{brockett_feedback} takes the form
\begin{equation}\label{brockett_closedloop}
\dot x = g_0(x) + \gamma\sum_{I\in S}\sum_{k=1}^4g_k^I(x)\phi_k^{I,\varepsilon} (t),
\end{equation}
with
$
g_0(x) = -( x_1, x_2, x_3, x_4, 0, 0, 0, 0, 0, 0)^\top,
$
and  $g_k^I(x) = v_k^I (x) f_k(x),\; k\in\{1,2,3,4\}$.
\begin{figure*}[t]
\includegraphics[width=0.5\linewidth]{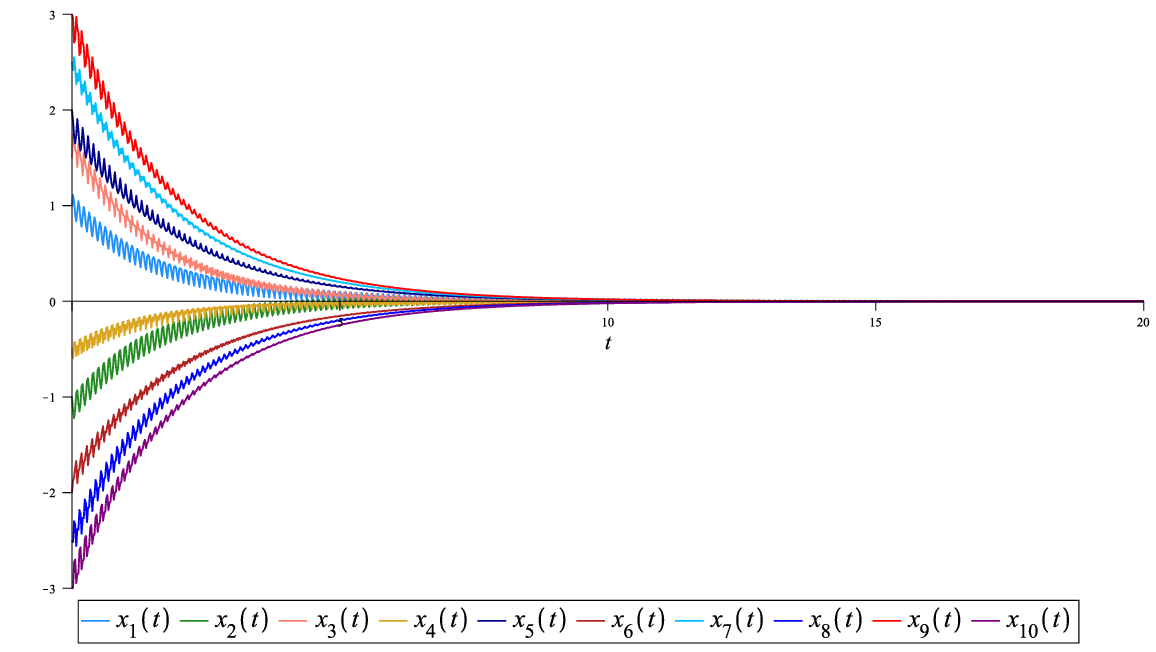} \includegraphics[width=0.5\linewidth]{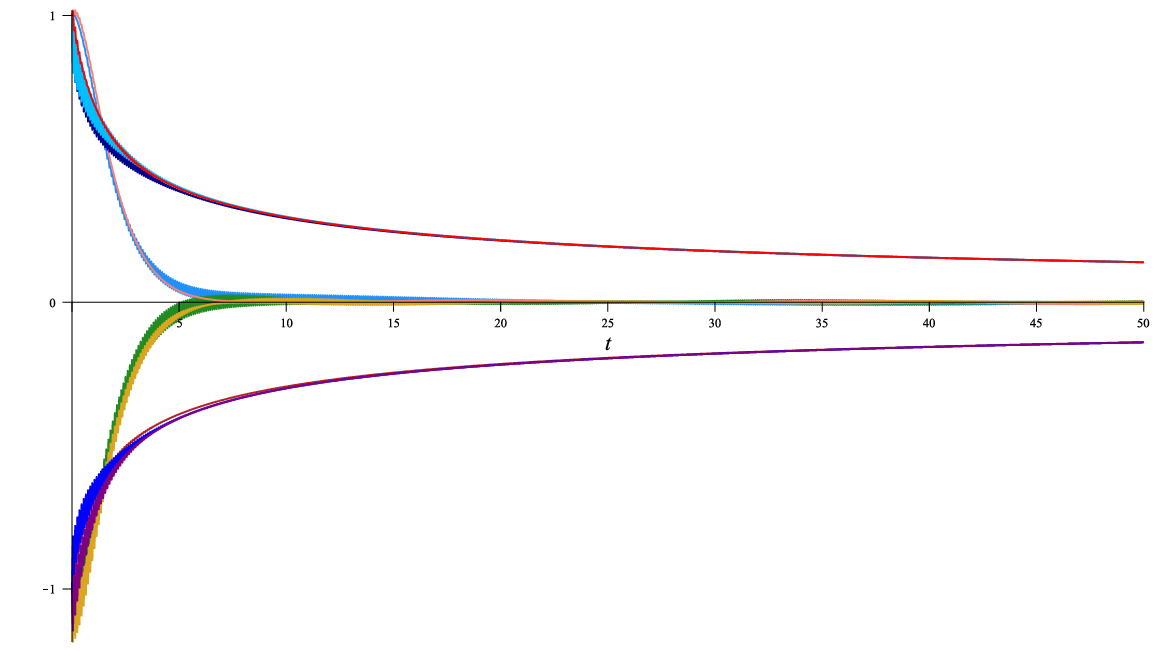}
\caption{{Solution components of the closed-loop system~\eqref{brockett},~\eqref{brockett_feedback},~\eqref{v_functions}   under condition~\eqref{brockett_stabcond_12} with $p=1$ (left) and~\eqref{brockett_stabcond_m} with $p=3/2$ (right).
}}
\end{figure*}
\begin{figure*}[t]
\includegraphics[width=0.5\linewidth]{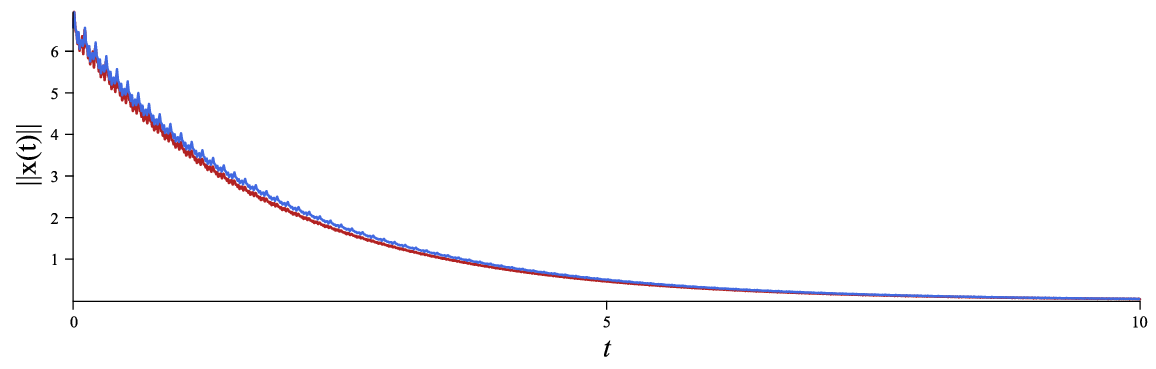} \includegraphics[width=0.5\linewidth]{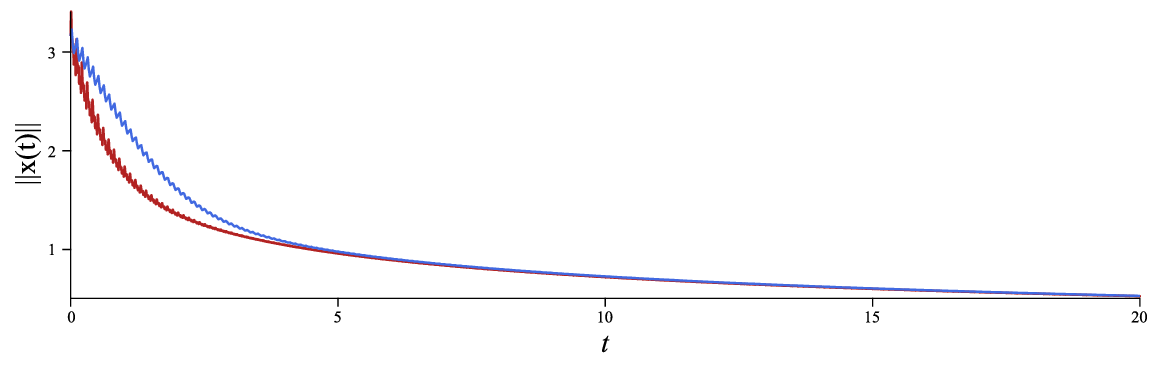}
\caption{{Norms of the classical (blue) and $\pi_\varepsilon$-solution (red) of the closed-loop system~\eqref{brockett},~\eqref{brockett_feedback},~\eqref{v_functions}.}}
\end{figure*}
The solution $x(t)$ of system~\eqref{brockett_closedloop} with the initial data $x(0)=x^0\in{\mathbb R}^{10}$ is represented by the Chen--Fliess series~\eqref{Chen-Fliess}:
$$
x(\varepsilon) = x^0 + \varepsilon\Big( g_0(x^0) + {\gamma}^2\sum_{I=(ij)\in S}[g_i^I,g_j^I](x^0) \Big) + O(\varepsilon^{3/2}).
$$
It is easy to see that $V(x)$ is a weak CLF, but not a strict CLF for system~\eqref{brockett}.
According to the general scheme presented in Section~\ref{gensec}, we design the functions $v_j^I (x)$ such that
$
W(x) = L_{g_0} V(x) +{\gamma}^2 L_{[g_1,g_2]} V(x)
$
is negative definite. Note that
$
\alpha(x)=L_{g_0} V(x) = - (x_1^2 + x_2^2+x_3^2+x_4^2)\le 0,
$
so that condition a) of Lemma~2 is satisfied.
Let us define
{\small \begin{equation}\label{v_functions}
\begin{aligned}
 &v_i^{I} (x)  =\sqrt{|v^{I}(x)|},\, v_j^{I} (x)  =\sqrt{|v^{I}(x)|} {\textrm sign}(v^{I}(x)),\\
& v^{(12)}(x)=-\tfrac{ {\textrm sign} (x_5)}2|x_5|^{2{p}-1},\,v^{(13)}(x)=-\tfrac{ {\textrm sign} (x_6)}2|x_6|^{2{p}-1},\\
&v^{(14)}(x)=-\tfrac{ {\textrm sign} (x_7)}2|x_7|^{2{p}-1},\,v^{(23)}(x)=-\tfrac{ {\textrm sign} (x_8)}2|x_8|^{2{p}-1},\\
&v^{(24)}(x)=-\tfrac{ {\textrm sign} (x_9)}2|x_9|^{2{p}-1},\,v^{(34)}(x)=-\tfrac{ {\textrm sign} (x_{10})}2|x_{10}|^{2{p}-1}.
\end{aligned}
\end{equation}
}
It is easy to see that the above functions $v_i^I(x)$ and $v_j^I(x)$ are sufficiently many times continuously differentiable
under a suitable choice of ${p}$.
Moreover, according to the notation in Lemma~2,
$
\beta(x)=-\|\tilde x\|^{2{p}}+\tfrac{2{p}-1}{4}\Phi(x),
$
where $\tilde x=(x_5,x_6,x_7,x_8,x_9,x_{10})^\top$, and $$\|\Phi(x)\|\le (x_1^2+x_2^2+x_3^2+x_4^2)\|\tilde x\|^{2{p}-2}(1+\|\tilde x\|)\text{ in }\mathbb R^{10}.$$
Thus, for all $x\in\mathbb R^{10}$,
{\begin{equation}\label{W_form}
\begin{aligned}
 W&(x) =- (x_1^2 + x_2^2+x_3^2+x_4^2)-\gamma^2\|\tilde x\|^{2{p}}\\
 &+\tfrac{(2{p}-1)\gamma^2}{4} \Phi(x) \le- (x_1^2 + x_2^2+x_3^2+x_4^2)\\
  \times&\Big(1-\tfrac{(2{p}-1)\gamma^2}{4}\|\tilde x\|^{2{p}-2}(1+\|\tilde x\|)\Big) -\gamma^2\|\tilde x\|^{2{p}}.
\end{aligned}
\end{equation}
The formal derivation of $W(x)$ is valid for $x_j\neq 0$, $j\in\{5,6,\dots,10\}$, and ${p}\in [1,\frac32]$, and for any $x_j$ if ${p} > \frac32$. Note that the resulting expression~\eqref{W_form} is well-defined for all $x\in{\mathbb R}^{10}$ and ${p}\ge \frac32$.
It can be shown that condition b) of Lemma~2 is satisfied in any bounded domain $D$ if ${p}<\frac54$, and condition~ii) holds for $D=\mathbb R^{10}$ if $p\ge \frac54$. Thus, the following corollary of Theorem~1 summarizes this case study: {\em there exist $\varepsilon>0$, $\gamma>0$, and $\delta>0$ such that each solution $x(t)$ of the closed-loop  system~\eqref{brockett},~\eqref{brockett_feedback},~\eqref{v_functions} with $x(0)\in B_\delta(0)$ tends to zero as $t\to+\infty$.}

In the considered example, it is easy to see how to define the gain parameters to ensure that $W(x)$ is negative definite. In particular,  $W(x) $ is negative definite in the domain $D_H=\{x\in\mathbb R\,\vert\; \|\tilde x\| < H\}$, $H>0$, if the following conditions are satisfied:
\begin{equation}\label{brockett_stabcond_m}
{p} > 1,\;   \gamma  < \Big(0, \tfrac{2}{\sqrt{(2{p}-1) H^{2{p}-1}(1+H)}} \Big),
\end{equation}
Moreover, $W(x)$ is negative definite in ${\mathbb R}^{10}$,  provided that
\begin{equation}\label{brockett_stabcond_12}
{p}=1,\; \gamma \in \big(0, \sqrt{2 } \big).
\end{equation}


To illustrate that the proposed feedback control ensures asymptotic stability of the multidimensional Brockett integrator,
we perform numerical integration of the closed-loop system~\eqref{brockett},~\eqref{brockett_feedback},~\eqref{v_functions} under conditions~\eqref{brockett_stabcond_m} and~\eqref{brockett_stabcond_12}.
Fig.~1 illustrates the behavior of the closed-loop system with  {$p=1$}
(left) and {$p=3/2$} (right).
As one can see, in both cases, the solutions tend to zero for large $t$, while the norm $\|x(t)\|$  exhibits  exponential convergence to $0$ if ${p}=1$, and a slower (polynomial) convergence if ${p}=\frac32$. In both cases, we set $\varepsilon=0.1$ and $\gamma=0.5$. The initial conditions are chosen as
$
x(0)=\left(1,-1,1.5,-0.5,2,-2,2.5,-2.5,3,-3\right)^\top,
$
in the first case, and
$
x(0)=\left(1,-1,\dots,1,-1\right)^\top
$
in the second one.
{
To compare the behavior of the classical and $\pi_\varepsilon$-solutions, we also present Fig.~2, which shows the plots of the norms of the corresponding solutions $\|x(t)\|$.
As seen in Fig.~2, the difference between the $\pi_\varepsilon$-solution (red graph) and the classical one (blue graph) is not significant, so that both stabilization schemes perform efficiently in this example.
}

\section{CONCLUSIONS AND FUTURE WORK}\label{secconc}
Our main theoretical contribution is formulated in Theorem~1 and illustrated by the multidimensional Brockett integrator.
One of the key requirements in Theorem~1 and Theorem~2 is the $C^2$ assumption on the control functions and the relevant vector fields $g_0$ and $g_k^I$.
As it can be seen in Section~\ref{secex}, this regularity property holds if the exponent~${p}$,
appearing in the Lyapunov function candidate, is suitably
chosen.

We expect to relax this regularity assumption while still ensuring \textit{practical} stabilizability, meaning attraction to a neighborhood of the origin,
where the neighborhood’s radius decreases as $\varepsilon$
 decreases.  This result can be established similarly to the proof of practical stability in~\cite[Theorem~3, Assertion~I]{GZE18}.
 Additionally, we intend to explore the problem of practical stabilization when
condition iii) of Theorem~\ref{prop_general} is violated.
In future work,
we {also aim to develop a more systematic approach for constructing $v^I_k(x)$ in~\eqref{feedback_general}, potentially reducing reliance on symbolic differentiation and symbolic inversion of the matrix $F(x)$ enabling application to more general settings (with the use of Lemma~2.4). }

{
Although the proposed stabilization scheme is developed theoretically and illustrated using a nilpotent example, it holds strong potential for practical applications, particularly in the control of mobile robots. These systems often exhibit a degree of nonholonomy 2, and the scheme can be applied directly to their original kinematic models or to their nilpotent approximations, such as those constructed using the method described in~\cite{bellaiche2005geometry}. This flexibility makes our approach promising for real-world robotic systems.}


\end{document}